\def\id{\operatorname{id}}
\def\re{\operatorname{Re}}
\newtheorem{cor}{Corollary}
\newtheorem{lem}[cor]{Lemma}
\newtheorem{thm}[cor]{Theorem}
\theoremstyle{remark}
\newtheorem{rem}[cor]{Remark}
\title{Proper holomorphic mappings between symmetrized ellipsoids}
\author{Pawe\l{} Zapa\l owski}
\address{Institute of Mathematics, Jagiellonian University, \L ojasiewicza 6 30-348 Krak\'ow, Poland}
\email{Pawel.Zapalowski@im.uj.edu.pl}
\begin{document}

\thanks{{\it 2010 Mathematics Subject Classification:} 32H35, 32A07.
\newline
{\it Key words and phrases:} proper holomorphic mappings,
symmetrized ellipsoids, quasi balanced domains, group of
automorphisms.
\newline
The research was partially supported by the Research Grant No. N
N201 361436 of the Polish Ministry of Science and Higher
Education.}

\begin{abstract}We characterize the existence of proper holomorphic
mappings in the special class of bounded $(1,2,\dots,n)$-balanced
domains in $\mathbb C^n$, called the symmetrized ellipsoids. Using
this result we conclude that there are no non-trivial proper
holomorphic self-mappings in the class of symmetrized ellipsoids.
We also describe the automorphism group of these domains.
\end{abstract}
\maketitle

\section{Introduction and statement of results}

For $n\geqslant 2$ and $p>0$ let
\begin{equation*}
\mathbb B_{p,n}:=\{(z_1,\dots,z_n)\in\mathbb
C^n:\sum_{j=1}^n|z_j|^{2p}<1\}
\end{equation*}
denote the \emph{generalized complex ellipsoid}. We shall write
$\mathbb B_n:=\mathbb B_{1,n}$, $\mathbb T:=\partial\mathbb B_1$.
Note that $\mathbb B_{p,n}$ is bounded, complete Reinhardt domain.

Let $\pi_n=(\pi_{n,1},\dots,\pi_{n,n}):\mathbb
C^n\rightarrow\mathbb C^n$ be defined as follows
\begin{equation*}
\pi_{n,k}(z)=\sum_{1\leqslant j_1<\dots<j_k\leqslant
n}z_{j_1}\dots z_{j_k},\quad 1\leqslant k\leqslant n,\
z=(z_1,\dots,z_n)\in\mathbb C^n.
\end{equation*}
Note that $\pi_n$ is a proper holomorphic mapping with
multiplicity $n!$, $\pi_n|_{\mathbb B_{p,n}}:\mathbb
B_{p,n}\rightarrow\pi_n(\mathbb B_{p,n})$ is proper too.

The set
\begin{equation*}
\mathbb E_{p,n}:=\pi_n(\mathbb B_{p,n})
\end{equation*}
is called the \textit{symmetrized} $(p,n)$-\textit{ellipsoid}.
Note that $\mathbb E_{p,n}$ is bounded $(1,2,\dots,n)$-balanced
domain (recall that a domain $D\subset\mathbb C^n$ is called the
$(k_1,\dots,k_n)$-\emph{balanced}, where $k_1,\dots,k_n\in\mathbb
N$, if $(\lambda^{k_1} z_1,\dots,\lambda^{k_n}z_n)\in D$ for any
$(z_1,\dots,z_n)\in D$ and $\lambda\in\overline{\mathbb B}_1$).
Geometric properties of $\mathbb E_{p,n}$ were studied in
\cite{z2008}. Here we answer some of the open questions posed
there. As the definition of the symmetrized ellipsoid is similar
to the one of the symmetrized polydisc $\mathbb G_n:=\pi_n(\mathbb
B_1^n)$, which has drawn a lot of attention recently (see
\cite{ay2004}, \cite{c2004}, \cite{ez2005}, \cite{npz2006} and the
references given there), it is quite natural to ask which
properties of the symmetrized polydisc are inherited by the
symmetrized ellipsoids.

Our aim is to give necessary and sufficient condition for
existence of the proper holomorphic mappings between the
symmetrized ellipsoids, the class of bounded
$(1,\dots,n)$-balanced domains.

Here is some notation. Let $\mathfrak S_n$ denote the group of
permutations of the set $\{1,\dots,n\}$. For $\sigma\in\mathfrak
S_n,\ z=(z_1,\dots,z_n)\in\mathbb C^n$ denote
$z_{\sigma}:=(z_{\sigma(1)},\dots,z_{\sigma(n)})$. Next, for any
$A\subset\mathbb C$ put $A_*:=A\setminus\{0\}$, $A^n_*:=(A_*)^n$.
Moreover, for any $z=(z_1,\dots,z_n)\in\mathbb C^n$,
$w=(w_1,\dots,w_n)\in\mathbb C^n$, $t\in\mathbb C$ and $r>0$ we
put $zw:=(z_1w_1,\dots,z_nw_n)$, $tz:=(tz_1,\dots,tz_n)$, and
$z^r:=(z_1^r,\dots,z_n^r)$.

\begin{rem}\label{rem:sym}
(a) Let $l\in\mathbb N$. Observe that $\mathbb C^n\ni
z\mapsto\pi_n(z^l)\in\mathbb C^n$ is a symmetric polynomial
mapping. According to the fundamental theorem of symmetric
polynomials (see e.g.~\cite{m1995}) there is a unique polynomial
mapping $P_l:\mathbb C^n\rightarrow\mathbb C^n$ such that
$\pi_n(z^l)=P_l(\pi_n(z))$, $z\in\mathbb C^n$. In particular,
$P_l(\mathbb E_{p,n})=\mathbb E_{p/l,n}$ for any $p>0$.

(b) Fix $A,B,C\in\mathbb C$ and put $L:=(L_1,\dots,L_n):\mathbb
C^n\rightarrow\mathbb C^n$, where
\begin{equation*}
L_j(z):=A\sum_{k=1}^nz_k+Bz_j+C,\quad z=(z_1,\dots,z_n)\in\mathbb
C^n,\ j=1,\dots,n.
\end{equation*}
Observe that $\pi_n\circ L$ is a symmetric polynomial mapping.
According to the fundamental theorem of symmetric polynomials
there is a unique polynomial mapping $S_L:\mathbb
C^n\rightarrow\mathbb C^n$ such that $\pi_n\circ L=S_L\circ\pi_n$.
\end{rem}
Now we are in position to formulate our main result.

\begin{thm}\label{thm:main}There exists proper holomorphic mapping
$f:\mathbb E_{p,n}\rightarrow\mathbb E_{q,n}$ iff $p/q\in\mathbb
N$. Furthermore, if $p/q\in\mathbb N$, the only proper
holomorphic mappings $f:\mathbb E_{p,n}\rightarrow\mathbb E_{q,n}$
\begin{enumerate}
\item[(a)]in case $p\neq1$, or $q\neq1/(2m)$, $m\in\mathbb N$, or $n\neq2$ are of the form
\begin{equation}\label{eq:maina}
f=P_{p/q}\circ\phi,
\end{equation}
where $P_{p/q}$ is as in Remark~\ref{rem:sym}~(a) and $\phi$ is an automorphism of $\mathbb E_{p,n}$;
\item[(b)] in case $p=1$, $q=1/(2m)$, $m\in\mathbb N$, and $n=2$ are of the form (\ref{eq:maina}) or
\begin{equation*}
f=P_m\circ\phi_{III}\circ P_2\circ\phi_{II},
\end{equation*}
where $\phi_{II}$ (resp. $\phi_{III}$) is the automorphism of
$\mathbb E_{1,2}$ (resp. $\mathbb E_{1/2,2}$) defined in
Corollary~\ref{cor:aute}.
\end{enumerate}
\end{thm}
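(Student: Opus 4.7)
The plan is to reduce everything to the covering ellipsoids via the symmetrization map $\pi_n$. Sufficiency is easy: if $l:=p/q\in\mathbb N$, then $z\mapsto z^l$ is a proper holomorphic mapping of $\mathbb B_{p,n}$ onto $\mathbb B_{q,n}$, and by Remark~\ref{rem:sym}(a) it intertwines with $\pi_n$ to yield a proper mapping $P_l:\mathbb E_{p,n}\to\mathbb E_{q,n}$; for any $\phi\in\operatorname{Aut}(\mathbb E_{p,n})$ the composition $P_l\circ\phi$ is then proper, producing the family~(\ref{eq:maina}).

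For necessity I would first construct a proper holomorphic lift $\tilde f:\mathbb B_{p,n}\to\mathbb B_{q,n}$ satisfying $\pi_n\circ\tilde f=f\circ\pi_n$. The natural candidate is an irreducible component of the analytic set $V:=\{(z,w)\in\mathbb B_{p,n}\times\mathbb B_{q,n}:\pi_n(w)=f(\pi_n(z))\}$. Since the first projection $V\to\mathbb B_{p,n}$ is proper and generically $n!$-to-one, one can pick a regular value $z^0$ over which $f(\pi_n(z^0))$ has $n!$ pairwise distinct preimages, locally invert $\pi_n$ near any such preimage $w^0$, and extend the resulting local section by analytic continuation. Simple connectedness of $\mathbb B_{p,n}$, together with Riemann's extension theorem across the thin branch set, makes this continuation globally single-valued; the resulting $\tilde f$ is proper because both $f$ and $\pi_n$ are.

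Now I would invoke the known classification of proper holomorphic mappings between generalized complex ellipsoids to conclude $p/q\in\mathbb N$ and to determine the form of $\tilde f$. Outside the ball case, the rigidity of $\mathbb B_{p,n}$ as a non-ball Reinhardt domain (Bedford--Bell--Catlin / Landucci-type results, also used in \cite{z2008}) forces $\tilde f$ to be a coordinate permutation-scaling composed with the $l$-th power map $z\mapsto z^l$. The compatibility $\pi_n\circ\tilde f=f\circ\pi_n$ then restricts the permutation-scaling to those symmetric enough to descend through $\pi_n$, giving $f=P_l\circ\phi$ with $\phi\in\operatorname{Aut}(\mathbb E_{p,n})$ and yielding case~(a).

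Case~(b) is forced by the fact that $\mathbb B_{1,2}=\mathbb B_2$ carries the full M\"obius automorphism group, much larger than the group of coordinate permutations and rotations. In this case one may factor the lift through the intermediate symmetrized ellipsoid $\mathbb E_{1/2,2}$, inserting a non-trivial ball automorphism at the intermediate stage to obtain the extra family $f=P_m\circ\phi_{III}\circ P_2\circ\phi_{II}$; I would verify, using the explicit description of $\operatorname{Aut}(\mathbb B_2)$ combined with $\mathfrak S_2$-equivariance of the lift, that no further families arise. The principal obstacle is the combination of the lifting step with the ellipsoid-rigidity classification; once those are available, the descent through $\pi_n$ and the bookkeeping of automorphisms (via Corollary~\ref{cor:aute}) is largely formal.
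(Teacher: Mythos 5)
Your necessity argument hinges on producing a \emph{global} proper lift $\tilde f:\mathbb B_{p,n}\to\mathbb B_{q,n}$ with $\pi_n\circ\tilde f=f\circ\pi_n$, and this is a genuine gap: such a lift need not exist. Concretely, take $(p,n)=(1/2,2)$ and $f=\phi_{III}\in\operatorname{Aut}(\mathbb E_{1/2,2})$ from Corollary~\ref{cor:aute}(c). Then $f(\pi_2(w))=\bigl(\zeta(w_1+w_2),\tfrac{\zeta^2}{4}(w_1-w_2)^2\bigr)$, so any lift would have components equal, pointwise, to the unordered pair $\tfrac{\zeta}{2}\bigl(w_1+w_2\pm2\sqrt{w_1w_2}\bigr)$; following the loop $w_1=\epsilon e^{i\theta}$, $w_2=\epsilon$ (small $\epsilon>0$) the two roots are exchanged while staying distinct, so no single-valued holomorphic selection on $\mathbb B_{1/2,2}$ exists. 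This also pinpoints the flaw in your justification: simple connectedness of $\mathbb B_{p,n}$ is irrelevant, because the analytic continuation of the local section is only defined off the preimage of the branch locus of $\pi_n$, and the complement of that hypersurface is not simply connected; Riemann extension can only be applied \emph{after} single-valuedness there is established, which is exactly the monodromy issue you would need to (and in general cannot) resolve. Since the whole reduction to the Landucci/Dini--Selvaggi classification of proper maps between generalized ellipsoids runs through this lift, the necessity part and the determination of the forms (a), (b) do not go through as written; your treatment of case (b) ("factor the lift through $\mathbb E_{1/2,2}$") presupposes the same kind of factorization and is likewise unsupported.

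For comparison, the paper avoids global lifting altogether: it first extends $f$ holomorphically past $\partial\mathbb E_{p,n}$ (Kosi\'nski's result for quasi-balanced domains with continuous Minkowski functional), then works on a small boundary piece where $\pi_n$ and the power maps $g_p$, $g_q$ are local biholomorphisms, obtaining a locally defined map $\varphi$ of the \emph{unit ball} sending a piece of $\partial\mathbb B_n$ into $\partial\mathbb B_n$; Alexander's theorem makes $\varphi$ an automorphism of $\mathbb B_n$, and the functional equation (\ref{eq:U}) together with Stein's explicit form of $\operatorname{Aut}(\mathbb B_n)$ yields both $p/q\in\mathbb N$ and the classification, the exceptional family in (b) arising precisely from the automorphisms (\ref{eq:lIV}) that do not descend/lift in the naive way. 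If you want to salvage your approach, you would have to replace the global lift by such a local argument (or prove a monodromy statement that, as the example above shows, is false in the generality you need).
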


Similar classification for the class of generalized complex
ellipsoids (with not necessarily equal exponents on each
coordinate) was done in \cite{l1984} (the case of positive integer
exponents) and \cite{ds1991} (case of positive real exponents).

An immediate consequence of Theorem~\ref{thm:main} is the
following Alexander-type theorem for the symmetrized ellipsoids
saying that every proper holomorphic self-map of the symmetrized
ellipsoid is an automorphism.

\begin{cor}\label{cor:alex}Let $f:\mathbb E_{p,n}\rightarrow\mathbb E_{p,n}$ be a
proper holomorphic self-mapping. Then $f$ is an automorphism.
\end{cor}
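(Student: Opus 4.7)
The plan is simply to specialize Theorem~\ref{thm:main} to the case $q = p$ and observe that the classification forces $f$ to be an automorphism. Since $p = q$, we have $p/q = 1 \in \mathbb N$, so the equivalence in Theorem~\ref{thm:main} guarantees that every proper holomorphic mapping $f:\mathbb E_{p,n}\to\mathbb E_{p,n}$ must be of one of the forms described in parts (a) and (b).

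First I would rule out part (b). The exceptional case requires $p = 1$, $q = 1/(2m)$ for some $m\in\mathbb N$, and $n = 2$. Under $q = p$, this demands $1 = 1/(2m)$, which has no solution with $m \in \mathbb N$. Consequently we must be in case (a), so $f = P_{p/q}\circ\phi = P_1\circ\phi$ for some automorphism $\phi$ of $\mathbb E_{p,n}$.

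It therefore remains to verify that $P_1 = \id$. By the defining property in Remark~\ref{rem:sym}~(a), $P_1$ is the unique polynomial mapping with $\pi_n(z^1) = P_1(\pi_n(z))$ for all $z\in\mathbb C^n$, and since $z^1 = z$ this reads $\pi_n(z) = P_1(\pi_n(z))$. As $\pi_n:\mathbb C^n\to\mathbb C^n$ is surjective (the elementary symmetric functions of the roots recover any coefficient tuple of a monic polynomial), the identity $P_1(w) = w$ holds for every $w\in\mathbb C^n$. Hence $f = \phi$ is an automorphism.

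There is no serious obstacle here beyond invoking Theorem~\ref{thm:main}; the only point requiring a moment of care is checking that the exceptional family in part (b) does not occur when $p = q$, which is immediate from the arithmetic condition $q = 1/(2m)$.
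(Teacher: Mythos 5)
Your argument is correct and is exactly the route the paper intends: Corollary~\ref{cor:alex} is stated there as an immediate consequence of Theorem~\ref{thm:main}, i.e.\ specialize to $q=p$, note that the exceptional case (b) is incompatible with $p=q$, and observe $P_{1}=\id$ so that $f=\phi$. Your verification that $P_1=\id$ via surjectivity of $\pi_n$ is a correct spelling-out of the detail the paper leaves implicit.
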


Theorem of that type was obtained in the case of $\mathbb B_n$ in
\cite{a1977} and its generalization on complex ellipsoids was done
in \cite{l1984} and \cite{ds1991}. Recently similar result was
obtained in \cite{k2009} for the tetrablock, which is
$(1,1,2)$-balanced domain in $\mathbb C^3$. Characterization of
proper holomorphic self-mappings of symmetrized polydisc is done
in \cite{ez2005}.

Furthermore, from the proof of Theorem~\ref{thm:main}, the
automorphisms group of $\mathbb E_{p,n}$ may be easily derived.

\begin{cor}\label{cor:aute}
\begin{enumerate}
\item[(a)] If $p\neq1$ and $(p,n)\neq(1/2,2)$ then the only automorphisms of $\mathbb E_{p,n}$ are
of the form
\begin{equation}\label{eq:e1}
\phi_I(z_1,z_2,\dots,z_n)=(\zeta
z_1,\zeta^2z_2,\dots,\zeta^nz_n),\quad(z_1,z_2\dots,z_n)\in\mathbb
E_{p,n},
\end{equation}
where $\zeta\in\mathbb T$.
\item[(b)]The only automorphisms of $\mathbb E_{1,n}$, are
of the form
\begin{equation}\label{eq:e2}
\phi_{II}(z)=\left(\frac{S_{L_{\varphi_{II}},1}(z)}{n(1-a_0z_1)},\dots,\frac{S_{L_{\varphi_{II}},n}(z)}{n^n(1-a_0z_1)^n}\right),\quad
z=(z_1,\dots,z_n)\in\mathbb E_{1,n},
\end{equation}
where
$S_{L_{\varphi_{II}}}=(S_{L_{\varphi_{II}},1},\dots,S_{L_{\varphi_{II}},1})$
is the polynomial mapping as in Remark~\ref{rem:sym}~(b) induced
by
$L_{\varphi_{II}}=(L_{\varphi_{II},1},\dots,L_{\varphi_{II},n}):\mathbb
C\rightarrow\mathbb C^n$, where
\begin{equation*}
L_{\varphi_{II},j}(z_1,\dots,z_n):=\zeta_1\Big(\sum_{k=1}^nz_k-na_0\Big)+\zeta_2\sqrt{1-na_0^2}\Big(\sum_{k=1}^nz_k-nz_j\Big),
\end{equation*}
for some $\zeta_1,\zeta_2\in\mathbb T$, $a_0\in\mathbb R$,
$a_0^2<\frac{1}{n}$.
\item[(c)]The only automorphisms of $\mathbb E_{1/2,2}$ are
of the form (\ref{eq:e1}) or
\begin{equation}\label{eq:e4}
\phi_{III}(z_1,z_2)=\left(\zeta
z_1,\zeta^2\left(\tfrac14z_1^2-z_2\right)\right),\quad(z_1,z_2)\in\mathbb
E_{1/2,2},
\end{equation}
where $\zeta\in\mathbb T$.
\end{enumerate}
\end{cor}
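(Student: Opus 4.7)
My plan is to read off the description of $\mathrm{Aut}(\mathbb E_{p,n})$ as a by-product of the lifting strategy that underlies Theorem~\ref{thm:main}. Any $\phi\in\mathrm{Aut}(\mathbb E_{p,n})$ is in particular a proper holomorphic self-map, so the theorem (with $q=p$) already reduces the problem to making such $\phi$'s explicit. I would lift $\phi$ through the proper, generically $n!$-to-$1$ map $\pi_n\colon\mathbb B_{p,n}\to\mathbb E_{p,n}$ to a proper self-map $\Phi$ of $\mathbb B_{p,n}$ satisfying $\pi_n\circ\Phi=\phi\circ\pi_n$; by the Alexander-type theorems for complex ellipsoids (Alexander for $p=1$; Landucci and Dini--Selvaggi for $p\neq 1$) this lift is itself an automorphism of $\mathbb B_{p,n}$. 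The corollary then follows by intersecting the classical description of $\mathrm{Aut}(\mathbb B_{p,n})$ with the requirement that $\pi_n\circ\Phi$ be symmetric in $z$, so that it descends through $\pi_n$.

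For $p\neq 1$ the group $\mathrm{Aut}(\mathbb B_{p,n})$ consists of compositions of a coordinate permutation with a diagonal unitary $\mathrm{diag}(\zeta_1,\dots,\zeta_n)$. The permutation is absorbed by $\pi_n$, and symmetry of $z\mapsto\pi_n(\zeta_1z_1,\dots,\zeta_nz_n)$ in $(z_1,\dots,z_n)$ forces $\zeta_1=\cdots=\zeta_n=:\zeta$; the relation $\pi_{n,k}(\zeta z)=\zeta^k\pi_{n,k}(z)$ then shows that the descent is precisely $\phi_I$, settling case~(a) whenever $(p,n)\neq(1/2,2)$. For $p=1$ the group $\mathrm{Aut}(\mathbb B_n)$ is much larger: every element is a unitary $U$ followed by a standard involution $\psi_a$ swapping $0$ and some $a\in\mathbb B_n$. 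The equivariance condition forces $a$ to be fixed by every coordinate permutation, hence $a=a_0(1,\dots,1)$ with $na_0^2<1$, and restricts $U$ to the normalizer of $\mathfrak S_n$ in the unitary group — concretely, to act by an independent rotation $\zeta_1$ on the diagonal line and $\zeta_2$ on its orthogonal complement. Writing out the numerator of the resulting Möbius map yields exactly the linear mapping $L_{\varphi_{II}}$, and combining Remark~\ref{rem:sym}(b) with clearing the common denominator $1-a_0\sum_k z_k$ produces formula~(\ref{eq:e2}); this disposes of case~(b).

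The main obstacle is case~(c), $(p,n)=(1/2,2)$, where the extra automorphism $\phi_{III}$ is invisible from $\mathrm{Aut}(\mathbb B_{1/2,2})$, since by the previous step that route produces only $\phi_I$. Here I would instead exploit the alternative presentation $\mathbb E_{1/2,2}=P_2(\mathbb E_{1,2})$ from Remark~\ref{rem:sym}(a). In unsymmetrized coordinates $z_1=w_1+w_2$, $z_2=w_1w_2$ the expression $\tfrac14 z_1^2-z_2=\tfrac14(w_1-w_2)^2$ is a symmetric holomorphic function of $(w_1,w_2)$ of the correct $(1,2)$-weighted homogeneity, which is exactly what is needed to combine with $\zeta z_1$ into an automorphism of $\mathbb E_{1/2,2}$; a direct inversion then confirms $\phi_{III}\in\mathrm{Aut}(\mathbb E_{1/2,2})$. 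Completeness of the list follows from the proper-map classification itself: any further automorphism of $\mathbb E_{1/2,2}$, pre-composed with $P_2$, would yield a proper holomorphic map $\mathbb E_{1,2}\to\mathbb E_{1/2,2}$ not accounted for by either normal form in Theorem~\ref{thm:main}(b), giving the desired contradiction. The same contradiction argument explains why no analogous exceptional automorphisms arise for other $(p,n)$.
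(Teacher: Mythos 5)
There is a genuine gap, and it sits exactly at the first step: the claim that every $\phi\in\operatorname{Aut}(\mathbb E_{p,n})$ admits a global lift $\Phi$ with $\pi_n\circ\Phi=\phi\circ\pi_n$. Since $\pi_n$ is a \emph{branched} proper covering, such a lift requires a monodromy argument over the complement of the branch locus and is not automatic; in fact it is false in general, and your own case (c) is the counterexample. For $\phi_{III}$ of (\ref{eq:e4}) the would-be lift must satisfy $\Phi_1+\Phi_2=\zeta(w_1+w_2)$, $\Phi_1\Phi_2=\tfrac{\zeta^2}{4}(w_1-w_2)^2$, i.e.\ $\Phi_{1,2}=\tfrac{\zeta}{2}\bigl((w_1+w_2)\pm2\sqrt{w_1w_2}\bigr)$, which is not single-valued on $\mathbb B_{1/2,2}$. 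You implicitly concede this (``$\phi_{III}$ is invisible from $\operatorname{Aut}(\mathbb B_{1/2,2})$''), but then you cannot simultaneously rely on the unrestricted lifting claim to prove completeness in cases (a) and (b): a priori, other $(p,n)$ could carry ``non-lifting'' automorphisms just as $(1/2,2)$ does, and nothing in your argument excludes this. Your closing remark that ``the same contradiction argument'' rules them out does not work: for $p=q$ the statement of Theorem~\ref{thm:main} only says proper self-maps are automorphisms, which gives no normal form; precomposing with $P_l:\mathbb E_{lp,n}\to\mathbb E_{p,n}$ only trades the problem for knowledge of $\operatorname{Aut}(\mathbb E_{lp,n})$, and the bootstrap bottoms out at $p=1$ (or does not start at all when $1/p\notin\mathbb N$), which is precisely the case your argument bases on the unjustified lift. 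Even your completeness argument for (c) needs the explicit form of $\operatorname{Aut}(\mathbb E_{1,2})$, so everything hinges on that missing step.

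This is also where your route diverges from the paper, which never lifts an automorphism globally. There, one first extends $f$ past $\partial\mathbb E_{p,n}$ (Kosi\'nski's extension theorem for quasi-balanced domains), lifts only \emph{locally} near a boundary point through $\pi_n$ and the power maps $g_p$, $g_q$ into the unit ball, and invokes Alexander's theorem to get a ball automorphism satisfying the functional equation (\ref{eq:U}); the explicit list of automorphisms then comes from Lemma~\ref{lem:rotperm}, whose combinatorial analysis of the equivariance condition (\ref{eq:sigmaxi}) is the actual content behind the corollary (with the exceptional $\varphi_{IV}$ of (\ref{eq:lV}) producing $\phi_{III}$). Your second-paragraph analysis (forcing $a=a_0(1,\dots,1)$ and the unitary part into the normalizer of $\mathfrak S_n$, hence scalars on the diagonal line and its complement) is sound in spirit and parallels that lemma for $p=1$, and your identification of $\phi_{III}$ via Remark~\ref{rem:sym} is correct; but without a proof that lifting (global, or local-plus-extension as in the paper) is available, the ``only'' in parts (a) and (b) — and hence also (c) — is not established.
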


\begin{rem}It should be mentioned that the automorphisms of the form
(\ref{eq:e1}) are special cases of the automorphisms of the form
(\ref{eq:e2}).
\end{rem}

\textbf{Acknowledgements.} The author is greatly indebted to
\L.~Kosi\'nski for many stimulating conversations.

\section{Proofs}

\begin{rem}For a $(k_1,\dots,k_n)$-balanced domain $D\subset\mathbb C^n$ one may
define the \emph{generalized Minkowski functional}
\begin{equation*}
\mu_D(z_1,\dots,z_n):=\inf\{\lambda>0:(\lambda^{-k_1}z_1,\dots,\lambda^{-k_n}z_n)\in
D\},\quad (z_1,\dots,z_n)\in\mathbb C^n.
\end{equation*}
Observe that for $(1,2,\dots,n)$-balanced domain $\mathbb E_{p,n}$
we have
\begin{equation*}
\mu_{\mathbb
E_{p,n}}(z)=\max\Big\{\Big(\sum_{j=1}^n|w_j|^{2p}\Big)^{1/2p}:(w_1,\dots,w_n)\in\pi_n^{-1}(z)\Big\},\quad
z\in\mathbb C^n.
\end{equation*}
In particular, $\mu_{\mathbb E_{p,n}}$ is continuous.
\end{rem}

\begin{proof}[Proof of Theorem~\ref{thm:main}]If $p/q\in\mathbb N$
then $f(z):=P_{p/q}(z)$ is a proper holomorphic mapping between
$\mathbb E_{p,n}$ and $\mathbb E_{q,n}$.

Assume $f:\mathbb E_{p,n}\rightarrow\mathbb E_{q,n}$ is proper and
holomorphic. Since $\mu_{\mathbb E_{p,n}}$ is continuous, it
follows from \cite{k2009} that $f$ extends holomorphically past
the boundary, $\partial\mathbb E_{p,n}$, of $\mathbb E_{p,n}$.
Hence there is a domain $V\subset\mathbb C^n$ such that
\begin{itemize}
\item$V\cap\partial\mathbb E_{p,n}\neq\varnothing$,
\item the mapping $f|_V:V\rightarrow f(V)$ is biholomorphic,
\item the mappings $\pi_n|_{\pi_n^{-1}(V)}:\pi_n^{-1}(V)\rightarrow
V$ and $\pi_n|_{\pi_n^{-1}(f(V))}:\pi_n^{-1}(f(V))\rightarrow
f(V)$ are biholomorphic.
\end{itemize}
Since $\pi_n(\partial\mathbb B_{p,n})=\partial\mathbb E_{p,n}$, it
is not restrictive to assume that for a domain
$U:=\pi_n^{-1}(V)\subset\mathbb C^n$
\begin{itemize}
\item$U\cap\partial\mathbb B_{p,n}\neq\varnothing$,
\item the mapping $U\ni z\overset{g_p}\longmapsto z^p\in g_p(U)$
is well defined and biholomorphic,
\item the mapping $\pi_n^{-1}(f(\pi_n(U)))\ni
z\overset{g_q}\longmapsto z^q\in g_q(\pi_n^{-1}(f(\pi_n(U))))$ is
is well defined and biholomorphic.
\end{itemize}
Hence the mapping $\psi:=\pi_n^{-1}\circ
f\circ\pi_n|_U:U\rightarrow\psi(U)$ is well defined and
biholomorphic. Consequently, the mapping
$\varphi:=g_q\circ\psi\circ g_p^{-1}|_{g_p(U)}$ is holomorphic and
$\varphi|_{g_p(U)\cap\mathbb B_n}$ is biholomorphic. As
$\varphi(g_p(U)\cap\partial\mathbb B_n)\subset\partial\mathbb
B_n$, it follows from \cite{a1974} that $\varphi$ extends to an
automorphism of $\mathbb B_n$, still denoted by
$\varphi=(\varphi_1,\dots,\varphi_n)$. Hence
\begin{equation}\label{eq:U}
\pi_n(\varphi^{1/q}(z))=f(\pi_n(z^{1/p})),\quad z\in g_p(U).
\end{equation}

We use the following lemma which will be proved afterwards.

\begin{lem}\label{lem:rotperm}Let $\varphi$ be an automorphism of $\mathbb
B_n$ which satisfies (\ref{eq:U}) and let $m:=1/q$, $l:=1/p$.
\begin{enumerate}
\item[(a)]If $m\notin\mathbb N$ then $m/l\in\mathbb N$ and, up to permutation of variables, $\varphi$ is of the form
\begin{equation}\label{eq:lI}
\varphi_{I}(z_1,\dots,z_n)=\zeta(\eta_1z_1,\dots,\eta_nz_n),\quad
(z_1,\dots,z_n)\in\mathbb B_n,
\end{equation}
for some $\zeta,\eta_j\in\mathbb T$, $\eta_j^m=1$, $j=1,\dots,n$.
\item[(b)]If $m\in\mathbb N$ then $l\in\mathbb N$ and $m/l\in\mathbb N$. Moreover,
\begin{enumerate}
\item[(i)]if $l=1$ and $n\geqslant3$ then, up to permutation of variables
and components, $\varphi$ is of the form
$\varphi_{II}=(\varphi_{II,1},\dots,\varphi_{II,n})$, where
\begin{multline}\label{eq:lII}
\varphi_{II,j}(z_1,\dots,z_n)\\=\frac{\eta_j}{n(1-a_0\sum_{k=1}^nz_k)}\left(\zeta_1\Big(\sum_{k=1}^nz_k-na_0\Big)+\zeta_2\sqrt{1-na_0^2}\Big(\sum_{k=1}^nz_k-nz_j\Big)\right),\\
(z_1,\dots,z_n)\in\mathbb B_n,
\end{multline}
for some $a_0\in\mathbb R$, $a_0^2<\frac{1}{n}$,
$\zeta_1,\zeta_2,\eta_j\in\mathbb T$, $\eta_j^m=1$, $j=1,\dots,n$;
\item[(ii)]if $l\geqslant2$ and $n\geqslant3$ then, up to permutation of variables
and components, $\varphi$ is of the form (\ref{eq:lI});
\item[(iii)]if $l=1$ and $n=2$ then, up to permutation of variables and components, $\varphi$ is of the
form (\ref{eq:lII}); moreover, if $m$ is even then, additionally,
up to permutation of variables and components, $\varphi$ is of the
form
\begin{multline}\label{eq:lIV}
\varphi_{III}(z_1,z_2)\\=\frac{1}{\sqrt{2}(1-a_0(z_1+z_2))}\left(\zeta_1(z_1+z_2-2a_0),\zeta_2\sqrt{1-2a_0^2}(z_1-z_2)\right),\\
\quad(z_1,z_2)\in\mathbb B_2,
\end{multline}
for some $\zeta_1,\zeta_2\in\mathbb T$ and $a_0\in\mathbb R$,
$a_0^2<\frac12$;
\item[(iv)]if $l=2$ and $n=2$ then, up to permutation of variables
and components, $\varphi$ is of the form (\ref{eq:lI}) or
\begin{equation}\label{eq:lV}
\varphi_{IV}(z_1,z_2)=\frac{\zeta}{\sqrt{2}}(z_1+z_2,\eta(z_1-z_2)),\quad
(z_1,z_2)\in\mathbb B_2,
\end{equation}
for some $\zeta,\eta\in\mathbb T$, $\eta^m=1$.
\end{enumerate}
\end{enumerate}
\end{lem}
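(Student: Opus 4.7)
Setting $l=1/p$ and $m=1/q$, the relation (\ref{eq:U}) reads
\begin{equation*}
\pi_n\bigl(\varphi(z)^{1/q}\bigr)=f(\pi_n(z^l)),\qquad z\in g_p(U),
\end{equation*}
whose right hand side is symmetric in $z$, so the left hand side is too. Since $\pi_n(a)=\pi_n(b)$ iff $b$ is a coordinate permutation of $a$, and since such a permutation depends continuously on $z$ on a connected set, the holomorphic map $\varphi(z)^{1/q}$ is equivariant up to a constant permutation on $g_p(U)$: there is a homomorphism $\tau\colon\mathfrak S_n\to\mathfrak S_n$ with $\varphi(z_\sigma)^{1/q}=\varphi(z)^{1/q}_{\tau(\sigma)}$, and raising to the $q$-th power and extending by analytic continuation yields the clean equivariance
\begin{equation*}
\varphi(z_\sigma)=\varphi(z)_{\tau(\sigma)},\qquad z\in\mathbb B_n,\ \sigma\in\mathfrak S_n.
\end{equation*}
Injectivity of $\varphi$ makes $\tau$ injective, and $\varphi$ maps the fixed-point locus of $\sigma$ biholomorphically onto that of $\tau(\sigma)$, so the two permutations share the number of cycles; in particular $\tau$ sends transpositions to transpositions and hence is inner, say $\tau(\sigma)=\rho\sigma\rho^{-1}$. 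Replacing $\varphi$ by $z\mapsto\varphi(z_\rho)$, absorbed by the phrase ``up to permutation of variables and components'', reduces us to the pure equivariance $\varphi(z_\sigma)=\varphi(z)_\sigma$.

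Next I invoke the standard description $\mathrm{Aut}(\mathbb B_n)=\{U\circ\Phi_a:U\in\mathrm U(n),\,a\in\mathbb B_n\}$ with $\Phi_a$ the Moebius involution exchanging $0$ and $a$. Pure equivariance forces the base point to lie on the diagonal $a=(a_0,\dots,a_0)$, with $a_0$ real after absorbing a phase into $U$, and forces $U$ to commute with every permutation matrix. For $n\geqslant 3$ such a $U$ must be of the form $U=\alpha I+\beta J$ with $J$ the all-ones matrix and $|\alpha|=|\alpha+n\beta|=1$, and multiplying $(\alpha I+\beta J)\circ\Phi_a$ out reproduces (\ref{eq:lII}). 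For $n=2$ the swap, after conjugation by the Hadamard-type unitary $(z_1,z_2)\mapsto\tfrac{1}{\sqrt2}(z_1+z_2,z_1-z_2)$, becomes $\mathrm{diag}(1,-1)$, so the freedom of permuting variables and components admits an additional family of equivariant automorphisms, responsible for the alternative forms (\ref{eq:lIV}) in (b)(iii) and (\ref{eq:lV}) in (b)(iv).

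Finally I impose the content of the functional equation beyond symmetry, namely that $\pi_n(\varphi(z)^{1/q})$ be a single-valued holomorphic function of $\pi_n(z^l)=P_l(\pi_n(z))$ (see Remark~\ref{rem:sym}(a)) rather than merely a symmetric function of $z$. This pins down three things simultaneously. First, for $\varphi(z)^{1/q}$ to be compatible with a polynomial dependence on $\pi_n(z^l)$ through $P_l$, the divisibility $p/q=m/l\in\mathbb N$ is necessary. Second, the Moebius denominator $1-a_0\sum_k z_k$ is linear in $\pi_{n,1}(z)$ and is expressible through $\pi_n(z^l)$ only when $l=1$; hence for $l\geqslant 2$ one is forced to $a_0=0$ and $\beta=0$, leaving precisely the diagonal unitaries (\ref{eq:lI}). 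Third, the scalars $\eta_j^{1/q}=\eta_j^m$ must be single-valued, yielding $\eta_j^m=1$. Splitting on $l=1$ vs.\ $l\geqslant 2$ and on $n\geqslant 3$ vs.\ $n=2$ then produces cases (a) and (b)(i)--(iv), with a parity check showing that (\ref{eq:lIV}) in (b)(iii) (resp.\ (\ref{eq:lV}) in (b)(iv)) survives the single-valuedness requirement only when $m$ is even. The main technical obstacle is the careful global branch tracking for $(\cdot)^{1/q}$, so that the pointwise reduction to permutations indeed yields a group-theoretic equivariance, together with verifying that the $n=2$ Hadamard-type forms meet the single-valuedness requirement precisely in the arithmetic regimes listed in the statement.
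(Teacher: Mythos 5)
There is a genuine gap at the very first step, and it propagates through the whole argument. From the symmetry of the right-hand side of (\ref{eq:U}) you may only conclude that $\pi_n\bigl(\varphi(\xi z_{\sigma})^m\bigr)=\pi_n\bigl(\varphi(z)^m\bigr)$ for every $\sigma\in\mathfrak S_n$ \emph{and} every $\xi\in\mathbb T^n$ with $\xi_j^l=1$; unscrambling $\pi_n$ then gives only $\varphi(z)=\eta\,\varphi_{\tau}(\xi z_{\sigma})$ with $\eta_j^m=1$ (this is exactly the paper's relation (\ref{eq:sigmaxi})). Your ``raising to the $q$-th power'' cannot remove the roots of unity: undoing an $m$-th power is only possible up to an $m$-th root of unity, so the ``clean equivariance'' $\varphi(z_\sigma)=\varphi(z)_{\tau(\sigma)}$ is an unjustified strengthening. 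It is in fact false for solutions the lemma itself lists: $\varphi_{II}$ of the form (\ref{eq:lII}) with non-equal $\eta_j$ does not commute with coordinate permutations, so your classification ($U=\alpha I+\beta J$ for $n\geqslant3$) yields a strictly smaller family than asserted; and, in the other direction, the weaker true relation admits matrix patterns that your argument never has to confront but the paper must exclude by explicit computation (e.g.\ the case with exactly one vanishing entry per row, ruled out via $|a_0|^2=n(n-2)/(n-1)^3>1/n$). The $\xi$-twists you discard are also precisely what produces (\ref{eq:lV}) in case (iv) and what eliminates the M\"obius-type maps when $l\geqslant2$; replacing this by the ``single-valuedness'' heuristics of your last paragraph is not a proof, and as stated (``for $l\geqslant2$ one is forced to $a_0=0$ and $\beta=0$, leaving precisely the diagonal unitaries'') it even contradicts conclusion (b)(iv).

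The arithmetic claims are likewise asserted rather than proved. In case (b) the paper obtains $l\in\mathbb N$ and $m/l\in\mathbb N$ by extending (\ref{eq:U}) to all of $\mathbb B_n$ and comparing multiplicities of the proper maps involved ($m=kl$, $k$ the multiplicity of $f$); you instead write $\pi_n(z^l)=P_l(\pi_n(z))$ before knowing $l\in\mathbb N$ and declare the divisibility ``necessary for compatibility''. In case (a) the essential mechanism is that $m\notin\mathbb N$ forces $\varphi(\mathbb B_n\cap\mathbb C^n_*)\subset\mathbb C^n_*$, which is what reduces $\varphi$ to a permutation composed with a diagonal rotation, after which $m/l\in\mathbb N$ is read off from $f_n(z)=\tilde\zeta z_n^{m/l}$ and the holomorphy of $f$; none of this branch/nonvanishing analysis, which you yourself flag as ``the main technical obstacle'', is actually carried out. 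To repair the proposal you would have to work with the full relation $\varphi(z)=\eta\varphi_{\tau}(\xi z_{\sigma})$ (roots of unity included) and redo the classification of the admissible matrices $Q$ and centers $a$ under it, which is essentially the paper's case analysis.
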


\begin{rem}It should be mentioned that the automorphisms of the form
(\ref{eq:lI}) are special cases of the automorphisms of the form
(\ref{eq:lII}).
\end{rem}

Note that for any automorphism $\varphi$ of $\mathbb B_n$ which
satisfies (\ref{eq:lI}), (\ref{eq:lII}), or (\ref{eq:lV})
respectively, there is an automorphism $\tilde\varphi$ of $\mathbb
B_n$ such that $\pi_n(\varphi^m(z))=\pi_n(\tilde\varphi^m(z))$ and
$\pi_n(\tilde\varphi^l(z))=\pi_n(\tilde\varphi^l(z_{\sigma}))$ for
any $z\in\mathbb B_n$, and $\sigma\in\mathfrak S_n$.

Indeed,
\begin{itemize}
\item in case of (\ref{eq:lI}) it suffices to take $\tilde\varphi$
of the form (\ref{eq:lI}) with $\eta_j=1$, $j=1,\dots,n$. Then the
relation $\phi\circ\pi_n=\pi_n\circ\tilde\varphi$ defines the
automorphism $\phi_I$ of $\mathbb E_{p,n}$ of the form
(\ref{eq:e1}), which obviously satisfies the relation
\begin{equation}\label{eq:aut1}
\phi(\pi_n(z^{1/p}))=\pi_n(\tilde\varphi^{1/p}(z)),\quad
z\in\mathbb B_n.
\end{equation}
\item In case of (\ref{eq:lII}) it suffices to take $\tilde\varphi$
of the form (\ref{eq:lII}) with $\eta_j=1$, $j=1,\dots,n$. Then
the relation $\phi\circ\pi_n=\pi_n\circ\tilde\varphi$ defines the
automorphism $\phi_{II}$ of $\mathbb E_{1,n}$ of the form
(\ref{eq:e2}), which obviously satisfies the relation
(\ref{eq:aut1}).
\item In case of (\ref{eq:lV}) it suffices to take $\tilde\varphi$
of the form (\ref{eq:lV}) with $\eta^2=1$. Then the relation
(\ref{eq:aut1}), which in this case has form
\begin{equation*}
\phi(\pi_2(z^2))=\pi_2(\tilde\varphi^2(z)),\quad
z\in\mathbb B_2,
\end{equation*}
defines the automorphism $\phi_{III}$ of $\mathbb E_{1/2,2}$ of
the form (\ref{eq:e4}).
\end{itemize}

It follows from Lemma~\ref{lem:rotperm} that $p/q=m/l\in\mathbb
N$. Consequently, using (\ref{eq:aut1}),
\begin{align*}
f(\pi_n(z^{1/p}))&=\pi_n(\varphi^{1/q}(z))=\pi_n(\tilde\varphi^{1/q}(z))=\pi_n((\tilde\varphi^{1/p}(z))^{p/q})\\
{}&=P_{p/q}(\pi_n(\tilde\varphi^{1/p}(z)))=P_{p/q}(\phi(\pi_n(z^{1/p}))),\quad
z\in g_p(U).
\end{align*}
The identity principle implies that $f=P_{p/q}\circ\phi$ which
ends the proof in the case, when equality (\ref{eq:U}) is
satisfied by the automorphisms of the form (\ref{eq:lI}),
(\ref{eq:lII}), or (\ref{eq:lV}).

In the case when equality (\ref{eq:U}) is satisfied by the
automorphism of the form (\ref{eq:lIV}), the situation is slightly
different and we proceed as follows. First observe that
$\varphi_{III}=\varphi_{IV}\circ\varphi_{II}$, where
$\varphi_{IV}$ and $\varphi_{II}$ are taken with
$\eta=\eta_1=\eta_2=1$. Since $m$ is even, $m=2m'$ for some
$m'\in\mathbb N$. Then the previous cases imply
\begin{align*}
f(\pi_2(z))&=\pi_2(\varphi_{III}^{2m'}(z))=\pi_2(\varphi_{IV}^{2m'}(\varphi_{II}(z)))=P_{m'}(\pi_2(\varphi_{IV}^2(\varphi_{II}(z))))\\
{}&=P_{m'}(\phi_{III}(\pi_2(\varphi_{II}^2(z))))=P_{m'}(\phi_{III}(P_2(\phi_{II}(\pi_2(z))))),\quad
z\in g_1(U),
\end{align*}
whence $f=P_{m'}\circ\phi_{III}\circ P_2\circ\phi_{II}$.
\end{proof}

\begin{rem}\label{rem:aut}Following \cite{s1972} any automorphism
$\varphi=(\varphi_1,\dots,\varphi_n)$ of the unit ball is of the
form
\begin{equation*}\label{eq:aut}
\varphi_j(z)=\frac{\sum_{k=1}^nq_{j,k}(z_k-a_k)}{R(1-\sum_{k=1}^n\bar
a_kz_k)},\quad z=(z_1,\dots,z_n)\in\mathbb B_n,\ j=1,\dots,n,
\end{equation*}
where $a=(a_1,\dots,a_n)\in\mathbb B_n$ is arbitrary,
$Q=[q_{j,k}]$ and $R$ are respectively a $n\times n$ matrix and a
constant such that
\begin{equation*}
\bar Q(\mathbb I_n-\bar a{}^t\!a){}^t\!Q=\mathbb I_n,\qquad\bar
R(1-{}^t\!a\bar a)R=1,
\end{equation*}
where $\mathbb I_n$ is the unit $n\times n$ matrix, whereas $\bar
A$ (resp.~${}^t\!A$) is the conjugate (resp.~transpose) of an
arbitrary matrix $A$. Moreover, $a$, $Q$, and $R$ satisfy
\begin{equation}\label{eq:stein}
\begin{cases}{}^t\!Q\bar{Q}-|R|^2\bar a{}^t\!a=\mathbb
I_n\\|R|^2-{}^t\!a{}^t\!Q\bar{Q}\bar{a}=1\\{}^t\!Q\bar{Q}\bar{a}=|R|^2a
\end{cases}
.
\end{equation}
In particular, $Q$ is unitary if $a=0$.
\end{rem}

\begin{proof}[Proof of Lemma~\ref{lem:rotperm}]In the proof we will use the
form of automorphism $\varphi$ of $\mathbb B_n$ as in
Remark~\ref{rem:aut}.

\emph{Ad} (a). Assume $m\notin\mathbb N$. Note that the function
on the right side of (\ref{eq:U}) is well defined on any domain
$D\subset\mathbb B_n\cap\mathbb C^n_*$ such that the fiber
$D_j:=\{\lambda\in\mathbb
C:(z_1,\dots,z_{j-1},\lambda,z_{j+1},\dots,z_n)\in D\}$ is
connected and simply connected for $j=1,\dots,n$. In particular,
the function $D\ni x\mapsto(\prod_{j=1}^n\varphi_j(z))^m$ is
holomorphic. Assumption $m\notin\mathbb N$ implies that
\begin{equation}\label{eq:cstar}
\varphi(\mathbb B_n\cap\mathbb C^n_*)\subset\mathbb C^n_*.
\end{equation}
We show that $\varphi$ is of the form (\ref{eq:lI}).

First we show that $a=0$ and for any $j\in\{1,\dots,n\}$ there
exists a unique $k$ such that $q_{j,k}\neq0$.

Indeed, suppose the contrary. Then either
\begin{itemize}
\item there are $j,k_1,k_2$ with $k_1\neq k_2$ and $q_{j,k_1}\neq0\neq
q_{j,k_2}$, or
\item there are $j,k_1$ with $q_{j,k_1}\neq0\neq a_{k_1}$ (since $\varphi$ is one-to-one mapping, for any $k$ there is a $j$ such that $q_{j,k}\neq0$).
\end{itemize}
In both cases one may define
\begin{equation*}
w_{k_1}:=a_{k_1}-\sum_{k\neq
k_1}\frac{q_{j,k}}{q_{j,k_1}}(w_k-a_k)\neq0,
\end{equation*}
provided $w_k\in\mathbb C_*$, $k\neq k_1$, are chosen close to
$a_k$ enough. Clearly, one may assume that
$w:=(w_1,\dots,w_n)\in\mathbb B_n$. Consequently, $w\in\mathbb
B_n\cap\mathbb C^n_*$ with $\varphi_j(w)=0$---a contradiction with
(\ref{eq:cstar}).

First equality in (\ref{eq:stein}) implies that
$\varphi(z_1,\dots,z_n)=(\zeta_1z_{\sigma(1)},\dots,\zeta_nz_{\sigma(n)})$
for some $\sigma\in\mathfrak S_n$ and $\zeta_j\in\mathbb T$,
$j=1,\dots,n$. Moreover, (\ref{eq:U}) and the identity principle
imply that for any $\omega\in\mathfrak S_n$ there is
$\tau\in\mathfrak S_n$ such that
\begin{equation*}
(\zeta^m_1z^m_{\omega(1)},\dots,\zeta^m_nz^m_{\omega(n)})=
(\zeta^m_{\tau(1)}z^m_{\tau(1)},\dots,\zeta^m_{\tau(n)}z^m_{\tau(n)}),\quad
z\in\mathbb B_n,
\end{equation*}
whence we conclude that $\zeta_j^m=\zeta_k^m=\tilde\zeta$ for
$j,k=1,\dots,n$, i.e.~$\varphi$ is of the form (\ref{eq:lI}).

Finally, observe that (\ref{eq:U}) implies that
\begin{equation*}
f(\pi_n(z^l))=\pi_n(\tilde\zeta z^m),\quad z\in g_p(U).
\end{equation*}
Hence $f_n(z_1,\dots,z_n)=\tilde\zeta z_n^{m/l}$. Since $f$ is
holomorphic on $\mathbb E_{p,n}$, we conclude that $m/l\in\mathbb
N$.

\emph{Ad} (b). Assume now $m\in\mathbb N$. Then
$\pi_n\circ\varphi^m:\mathbb B_n\rightarrow\mathbb E_{1/m,n}$ is
the proper holomorphic mapping with multiplicity $n!m$. Thus
equality (\ref{eq:U}) extends on $\mathbb B_n$ and implies that
$g_{1/p}:\mathbb B_n\rightarrow\mathbb B_{p,n}$ is the proper
holomorphic mapping with multiplicity $1/p=l\in\mathbb N$, $m=kl$,
where $k\in\mathbb N$ is the multiplicity of $f$.

The equality (\ref{eq:U}) and the identity principle imply that
for any $\sigma\in\mathfrak S_n$ and
$\xi=(\xi_1,\dots,\xi_n)\in\mathbb T^n$, $\xi_j^l=1$,
$j=1,\dots,n$, there are $\tau\in\mathfrak S_n$ and
$\eta=(\eta_1,\dots,\eta_n)\in\mathbb T^n$, $\eta_j^m=1$,
$j=1,\dots,n$, such that
\begin{equation}\label{eq:sigmaxi}
\varphi(z)=\eta\varphi_{\tau}(\xi z_{\sigma}),\quad z\in\mathbb B_n.
\end{equation}

Observe that condition (\ref{eq:sigmaxi}) implies that
$a=(a_0,\dots,a_0)$ for some $a_0\in\frac{1}{\sqrt{n}}\mathbb
B_1$. Indeed, for any $\sigma\in\mathfrak S_n$ there are
$\tau\in\mathfrak S_n$ and $\eta\in\mathbb T^n$ such that
\begin{equation*}
0=\varphi(a)=\eta\varphi_{\tau}(a_{\sigma}).
\end{equation*}
Hence $\varphi(a_{\sigma})=0$, i.e. $a=a_{\sigma}$.

Moreover, for $l>1$ $\varphi$ is unitary. Indeed, suppose
$a\neq0$. Then there is $\xi\in\mathbb T^n$, $\xi^l=1$, with $\xi
a\neq a$. Hence $0=\varphi(a)=\eta\varphi_{\tau}(\xi
a)$---contradiction, since $\varphi_{\tau}(a)=0$.

\emph{Ad} (i). The equality (\ref{eq:U}) implies that
$z\mapsto\pi_n(\varphi^m(z))$ is symmetric polynomial mapping. In
particular, the polynomial
\begin{equation}\label{eq:pinn}
\mathbb
C^n\ni(z_1,\dots,z_n)\mapsto\prod_{j=1}^n\left(\sum_{k=1}^nq_{j,k}(z_k-a_0)\right)^m
\end{equation}
is symmetric.

Let $N_j:=\#\{k:q_{j,k}\neq0\}$, $j=1,2,\dots,n$, and let
\begin{equation*}
N_Q:=\min\{\#\{z:\exists_{j,k}\
\eta_jq_{j,k}=z\}:\eta_j,\in\mathbb T,\ \eta_j^m=1,\
j=1,\dots,n\}.
\end{equation*}
The matrix $Q{}^t\!\eta$ for $\eta=(\eta_1,\dots,\eta_n)\in\mathbb
T^n$, $\eta_j^m=1$, $j=1,\dots,n$, such that
$N_Q=\#\{z:\exists_{j,k}\ \eta_jq_{j,k}=z\}$ we call the
\emph{reduced} matrix of the matrix $Q$.
 Note that the polynomial (\ref{eq:pinn}) has $n$
different---up to multiplicative constant---linear factors.
Consequently, either
\begin{enumerate}
\item[(i-i)]$N_1=\dots=N_n=1$, or
\item[(i-ii)]$N_1=\dots=N_n=n-1$, or
\item[(i-iii)]$N_1=\dots=N_n=n$.
\end{enumerate}
We consider these cases separately.

\emph{Case} (i-i). If $N_j=1$, $j=1,2,\dots,n$, then for any $k$
there is a unique $j=j(k)$ such that $q_{j,k}\neq0$. Consequently,
\begin{equation*}
\varphi(z_1,\dots,z_n)=(q_{1,\sigma(1)}(z_{\sigma(1)}-a_0),\dots,q_{n,\sigma(n)}(z_{\sigma(n)}-a_0))
\end{equation*}
for some $\sigma\in\mathfrak S_n$. First equality in
(\ref{eq:stein}) implies that $a_0=0$ and $|q_{j,\sigma{(j)}}|=1$,
$j=1,\dots,n$. Repeating the argument from part (a) we conclude
that $\varphi$ is of the form (\ref{eq:lI}).

\emph{Case} (i-ii). Suppose now that $N_j=n-1\geqslant2$,
$j=1,2,\dots,n$. Then the symmetry of the polynomial
(\ref{eq:pinn}) implies that for any $k$ there is a unique $j$
such that $q_{j,k}=0$. We consider two cases.
\begin{itemize}
\item Assume that $N_Q=2$ and if $q_{j,k}\neq0$ then
$q_{j,k}=\eta_j\alpha$, $\eta_j\in\mathbb T$, $\eta_j^m=1$,
$j,k=1,\dots,n$, for some $\alpha\in\mathbb C_*$. First equality
in (\ref{eq:stein}) implies
\begin{equation}\label{eq:n-11}
(n-1)|\alpha|^2=1+|a_0|^2|R|^2,\quad (n-2)|\alpha|^2=|a_0|^2|R|^2.
\end{equation}
If $a_0=0$ then the equalities above lead to contradiction.
Therefore assume $a_0\neq0$. Consequently, $|\alpha|=1$, which,
together with the second equality in (\ref{eq:stein}) implies
\begin{equation}\label{eq:n-12}
|R|^2-(n-1)^3|a_0|^2=1.
\end{equation}
It follows from (\ref{eq:n-11}) and (\ref{eq:n-12}) that
\begin{equation*}
|a_0|^2=\frac{n(n-2)}{(n-1)^3}.
\end{equation*}
Elementary calculation shows that
$\frac{1}{n}<\frac{n(n-2)}{(n-1)^3}$ for $n\geqslant3$---a
contradiction, since $|a_0|^2<\frac{1}{n}$.
\item Assume now that $N_Q\geqslant3$. Then the symmetric polynomial
(\ref{eq:pinn}) has to have at least $2n$ different factors---a
contradiction.
\end{itemize}

\emph{Case} (i-iii). Let $N_j=n$, $j=1,2,\dots,n$. We consider
three cases.
\begin{itemize}
\item$N_Q=1$. Then the first equality in (\ref{eq:stein}) leads to a
contradiction.
\item$N_Q=2$ and one of the entries in the reduced matrix $Q{}^t\!\eta$ appears in some row exactly once.
Because of the symmetry of the polynomial (\ref{eq:pinn}) we infer
that it is the case in every row and in every column. Hence we may
assume that
\begin{equation}\label{eq:i-iii}
q_{j,k}=\begin{cases}\eta_j\alpha,\quad&\textnormal{if }j\neq
k\\\eta_j\beta,\quad&\textnormal{if }j=k
\end{cases},
\end{equation}
for some $\alpha,\beta\in\mathbb C$, $\alpha\neq\beta$. Then
equalities (\ref{eq:stein}) give
\begin{equation*}
\begin{cases}
(n-1)|\alpha|^2+|\beta|^2=1+|a_0|^2|R|^2\\
(n-2)|\alpha|^2+2\re(\alpha\bar\beta)=|a_0|^2|R|^2\\
|R|^2-n|a_0|^2|(n-1)\alpha+\beta|^2=1\\
\bar a_0|(n-1)\alpha+\beta|^2=a_0|R|^2
\end{cases},
\end{equation*}
which, after elementary calculation, implies
\begin{equation*}
\alpha=\frac{1}{n}\left(\frac{\zeta_1}{\sqrt{1-na_0^2}}+\zeta_2\right),\quad
\beta=\frac{1}{n}\left(\frac{\zeta_1}{\sqrt{1-na_0^2}}-(n-1)\zeta_2\right)
\end{equation*}
for some $a_0\in\mathbb R$, $a_0^2<\frac{1}{n}$, and
$\zeta_1,\zeta_2\in\mathbb T$. Consequently,
\begin{equation}\label{eq:Q1}
q_{j,k}=\begin{cases}\frac{\eta_j}{n}\left(\frac{\zeta_1}{\sqrt{1-na_0^2}}+\zeta_2\right),\quad&\textnormal{if
}j\neq
k\\\frac{\eta_j}{n}\left(\frac{\zeta_1}{\sqrt{1-na_0^2}}-(n-1)\zeta_2\right),\quad&\textnormal{if
}j=k
\end{cases}
\end{equation}
for some $a_0\in\mathbb R$, $a_0^2<\frac{1}{n}$, and
$\zeta_1,\zeta_2,\eta_j\in\mathbb T$, $\eta_j^m=1$, $j=1,\dots,n$.
Condition (\ref{eq:Q1}) implies that $\varphi$ is as in (i).
\item$N_Q\geqslant2$ and each of the entries in the reduced matrix $Q{}^t\!\eta$ appears in some row at
least twice. Because of the symmetry of the polynomial
(\ref{eq:pinn}) we infer that it is the case in every row. Denote
all entries of the reduced matrix $Q{}^t\!\eta$ by
$\alpha_1,\dots,\alpha_{N_Q}$ and let $N_{\alpha_j}$ denote the
number of entries of the matrix $Q{}^t\!\eta$ equal to $\alpha_j$
in any row. By the symmetry, the polynomial (\ref{eq:pinn}) has to
have $n!/\prod_{j=1}^{N_Q}N_{\alpha_j}!$ different factors. Since
$N_{\alpha_j}\geqslant2$, $j=1,\dots,N_Q$, we easily conclude that
\begin{equation*}
\frac{n!}{\prod_{j=1}^{N_Q}N_{\alpha_j}!}>n
\end{equation*}
--- a contradiction.
\end{itemize}

\emph{Ad} (ii). We repeat the reasoning from the case (i). Since
$a=0$, the case $N_j=1$, $j=1,\dots,n$, implies that $\varphi$ is
of the form (\ref{eq:lI}).

Suppose now that $N_j=N\in\{n-1,n\}$, $j=1,2,\dots,n$. Equality
(\ref{eq:U}) implies that $\pi_n(\varphi^m(z))=\pi_n(\varphi^m(\xi
z))$ for any $z\in\mathbb B_n$ and
$\xi=(\xi_1,\dots,\xi_n)\in\mathbb T^n$, $\xi_j^l=1$,
$j=1,\dots,n$, $l\geqslant2$.
\begin{itemize}
\item If $N=n$, then the polynomial (\ref{eq:pinn}) has
to have at least $2^{n-1}$ different---up to multiplicative
constant---linear factors. Consequently, $2^{n-1}\leqslant n$---a
contradiction;
\item If $N=n-1\neq1$, then the polynomial (\ref{eq:pinn}) has
to have at least $2n$ different---up to multiplicative
constant---linear factors. Consequently, $2n\leqslant n$---a
contradiction.
\end{itemize}

\emph{Ad} (iii). In this case condition (\ref{eq:sigmaxi}) means
that for any $\sigma\in\mathfrak S_2$ there are $\tau\in\mathfrak
S_2$ and $\eta=(\eta_1,\eta_2)\in\mathbb T^2$, $\eta_j^m=1$,
$j=1,2$, such that
\begin{equation}\label{eq:sigmaxi(iii)}
\varphi(z)=\eta\varphi_{\tau}(z_{\sigma}),\quad z\in\mathbb B_2.
\end{equation}
Without loss of generality we may assume that $\sigma\neq\id$. We consider two cases.

\emph{Case $\tau=\id$}. Then (\ref{eq:sigmaxi(iii)}) for
$z=(0,a_0)$ and $z=(a_0,0)$ implies $a_0=0$ or
\begin{equation}\label{eq:u1}
\begin{cases}q_{1,1}=\eta_1q_{1,2}\\q_{2,1}=\eta_2q_{2,2}\\q_{1,2}=\eta_1q_{1,1}\\q_{2,2}=\eta_2q_{2,1}\end{cases}.
\end{equation}
If $a_0=0$, then (\ref{eq:sigmaxi(iii)}) for $z=(0,z_2)$,
$z_2\neq0$, and $z=(z_1,0)$, $z_1\neq0$, implies again condition
(\ref{eq:u1}). Observe that $\eta_1^2=\eta_2^2=1$.

Consequently, we are looking for matrix $Q$ satisfying
(\ref{eq:stein}) and (\ref{eq:u1}). Elementary calculation shows
that
\begin{itemize}
\item if $m$ is odd, then $\eta_j=1$, $j=1,2$, and
(\ref{eq:u1}) leads to contradiction with (\ref{eq:stein});
\item if $m$ is even, then $Q$ satisfies (\ref{eq:stein}) iff $\eta_1\eta_2=-1$ and $a_0\in\mathbb R$, $a_0^2<\frac12$. In this case $Q$ is, up to permutation of the rows, of
the form
\begin{equation}\label{eq:Q2}
Q=\frac{1}{\sqrt{2}}\begin{pmatrix}\frac{\zeta_1}{\sqrt{1-2a_0^2}}&\frac{\zeta_1}{\sqrt{1-2a_0^2}}\\\zeta_2&-\zeta_2\end{pmatrix},\quad\zeta_1,\zeta_2\in\mathbb
T.
\end{equation}
\end{itemize}
Condition (\ref{eq:Q2}) implies that $\varphi$ is of the form
(\ref{eq:lIV}).

\emph{Case $\tau\neq\id$}. Then (\ref{eq:sigmaxi(iii)}) for
$z=(0,a_0)$ and $z=((a_0,0))$ implies $a_0=0$ or
\begin{equation}\label{eq:u2}
\begin{cases}q_{1,1}=\eta_1q_{2,2}\\q_{2,1}=\eta_2q_{1,2}\\q_{1,2}=\eta_1q_{2,1}\\q_{2,2}=\eta_2q_{1,1}\end{cases}.
\end{equation}
If $a_0=0$, then (\ref{eq:sigmaxi(iii)}) for $z=(0,z_2)$,
$z_2\neq0$, and $z=(z_1,0)$, $z_1\neq0$, implies again condition
(\ref{eq:u2}), which is equal to (\ref{eq:i-iii}). Consequently,
we infer that the matrix $Q$ is of the form (\ref{eq:Q1}), i.e.
$\varphi$ is of the form (\ref{eq:lII}).

\emph{Ad} (iv). We consider two cases.

\emph{Case $\sigma=\id$, $\xi=(1,-1)$}. It follows immediately from (\ref{eq:sigmaxi}) that $\tau\neq\id$. Then (\ref{eq:sigmaxi}) for
$z=(z_1,0)$, $z_1\neq0$, and $z=(0,z_2)$, $z_2\neq0$, implies
\begin{equation*}
\begin{cases}q_{1,1}=\eta_1q_{1,2}\\q_{2,1}=\eta_2q_{1,1}\\q_{1,2}=-\eta_1q_{2,2}\\q_{2,2}=-\eta_2q_{1,2}\end{cases}.
\end{equation*}
In particular, $\eta_1\eta_2=1$. Since $Q$ is unitary, it follows that
\begin{equation}\label{eq:Q6}
Q=\frac{1}{\sqrt{2}}\begin{pmatrix}\zeta_1&\zeta_2\\\eta\zeta_1&-\eta\zeta_2\end{pmatrix},\quad\zeta_1,\zeta_2,\eta\in\mathbb T,\ \eta^m=1.
\end{equation}

\emph{Case $\sigma\neq\id$, $\xi=(1,-1)$}. We consider two subcases.
\begin{itemize}
\item $\tau=\id$. Then (\ref{eq:sigmaxi}) for
$z=(z_1,0)$, $z_1\neq0$, and $z=(0,z_2)$, $z_2\neq0$, implies
\begin{equation*}
\begin{cases}q_{1,1}=-\eta_1q_{1,2}\\q_{2,1}=-\eta_2q_{2,2}\\q_{1,2}=\eta_1q_{1,1}\\q_{2,2}=\eta_2q_{2,1}\end{cases}.
\end{equation*}
In particular, $\eta_1^2=\eta_2^2=-1$, which is possible only iff
$4\mid m$. If it is the case, then $Q$ is unitary iff, up to
permutation of rows,
\begin{equation}\label{eq:Q7}
Q=\frac{1}{\sqrt{2}}\begin{pmatrix}\zeta_1&i\zeta_1\\\zeta_2&-i\zeta_2\end{pmatrix},\quad\zeta_1,\zeta_2\in\mathbb
T.
\end{equation}
\item $\tau\neq\id$. Then (\ref{eq:sigmaxi}) for
$z=(z_1,0)$, $z_1\neq0$, and $z=(0,z_2)$, $z_2\neq0$, implies
\begin{equation*}
\begin{cases}q_{1,1}=-\eta_1q_{2,2}\\q_{2,1}=-\eta_2q_{1,2}\\q_{1,2}=\eta_1q_{2,1}\\q_{2,2}=\eta_2q_{1,1}\end{cases}.
\end{equation*}
In particular, $\eta_1\eta_2=-1$, whence $\eta_1=-\bar\eta_2$.
then $Q$ is unitary iff, up to permutation of rows,
\begin{equation}\label{eq:Q8}
Q=\zeta\begin{pmatrix}r&\sqrt{1-r^2}\\-\eta\sqrt{1-r^2}&\eta r
\end{pmatrix},\quad\zeta,\eta\in\mathbb T,\ \eta^m=1,\ r\in[0,1].
\end{equation}
\end{itemize}

Straightforward calculation shows that the only unitary matrices
satisfying (\ref{eq:Q6}) and ((\ref{eq:Q7}) or (\ref{eq:Q8})) are,
up to permutation of rows, of the form
\begin{equation}\label{eq:Q9}
Q=\frac{\zeta}{\sqrt{2}}\begin{pmatrix}1&1\\\eta&-\eta\end{pmatrix},\quad\zeta,\eta\in\mathbb
T,\ \eta^m=1.
\end{equation}

It is easy to see that unitary automorphism $\varphi$ of $\mathbb
B_2$ represented by matrix (\ref{eq:Q9}) satisfies condition
(\ref{eq:sigmaxi}). Since (\ref{eq:sigmaxi}) is also satisfied by
the automorphism $\varphi$ of the form (\ref{eq:lI}), the proof of
the case (iv) is finished.
\end{proof}

\end{document}